\newtheorem{theorem}{Theorem}[section]
\newtheorem{lemma}[theorem]{Lemma}
\newtheorem{proposition}[theorem]{Proposition}
\newtheorem{definition}[theorem]{Definition}
\begin{document}

 \title{On the distance from a weakly normal matrix polynomial to
 matrix polynomials with a prescribed multiple eigenvalue}

 \author{E. Kokabifar\thanks{Department of Mathematics,
 Faculty of Science, Yazd University, Yazd, Iran
 (e.kokabifar@stu.yazd.ac.ir, loghmani@yazd.ac.ir).},\,
 G.B. Loghmani\footnotemark[1]\,\,
 and P.J. Psarrakos\thanks{Department of Mathematics,
 National Technical University of Athens, Zografou Campus,
 15780 Athens, Greece (ppsarr@math.ntua.gr).}}

\maketitle

\vspace{-6mm}

\begin{abstract}
Consider an $n \times n$ matrix polynomial $P(\lambda)$. An upper
bound for a spectral norm distance from $P(\lambda)$ to the set of
$n \times n$ matrix polynomials that have a given scalar
$\mu\in\mathbb{C}$ as a multiple eigenvalue was recently obtained
by Papathanasiou and Psarrakos (2008). This paper concerns a
refinement of this result for the case of weakly normal matrix
polynomials. A modification method is implemented and its
efficiency is verified by an illustrative example.
\end{abstract}

{\emph{Keywords:}}  Matrix polynomial,
                    Eigenvalue,
                    Normality,
                    Perturbation,
                    Singular value.

{\emph{AMS Classification:}}  15A18,
                              65F35.

\section{Introduction}

Let $A$ be an $n\times n$ complex matrix and $\mu$ be a complex
number, and denote by $\mathcal{M}_{\mu}$ the set of $n\times n$
complex matrices that have $\mu \in \mathbb{C}$ as a multiple
eigenvalue. Malyshev \cite{malyshev} obtained the following
formula for the spectral norm distance from $A$ to
$\mathcal{M}_{\mu}$:
\begin{equation*}
  \mathop {\min } \limits_{B \in \mathcal{M}_{\mu}}
  {\left\| {A - B} \right\|_2} =
  \mathop {\max}\limits_{\gamma \ge 0} {s_{2n - 1}}
  \left( {\left[ {\begin{array}{*{20}{c}}
  {A - \mu I}&{\gamma {I_n}}  \\  0&{A - \mu I}
  \end{array}} \right]} \right),
\end{equation*}
where $\|\cdot\|_2$ denotes the spectral matrix norm (i.e., that
norm subordinate to the euclidean vector norm) and $\,s_1(\cdot)
\geq s_2(\cdot) \geq s_3(\cdot) \geq \cdots \,$ are the singular
values of the corresponding matrix in nonincreasing order.
Malyshev's work can be considered as a theoretical solution to
Wilkinson's problem, that is, the calculation of the distance from
a matrix $A \in \mathbb{C}^{n \times n}$ that has all its
eigenvalues simple to the $n\times n$ matrices with multiple
eigenvalues. Wilkinson introduced this distance in
\cite{wilkinson}, and some bounds for it were computed by Ruhe
\cite{ruhe}, Wilkinson \cite{wil1,wil2,wil3,wil4} and Demmel
\cite{demmel1}.

However, in the non-generic case where $A$ is a normal matrix,
Malyshev's formula is not directly applicable. In 2004, Ikramov
and Nazari \cite{ikramovasli} showed this point and obtained an
extension of Malyshev's method for normal matrices. Moreover,
Malyshev's results were extended by Lippert \cite{lipert} and
Gracia \cite{gracia}; in particular, they computed a spectral norm
distance from $A$ to the set of matrices that have two prescribed
eigenvalues and studied a nearest matrix with the two desired
eigenvalues. Nazari and Rajabi \cite{nazarirajabi} refined the
method obtained by Lippert and Gracia for the case of normal
matrices.

In 2008, Papathanasiou and Psarrakos \cite{papa} introduced and
studied a spectral norm distance from a $n\times n$ matrix
polynomial $P(\lambda)$ to the set of $n\times n$ matrix
polynomials that have a scalar $\mu\in\mathbb{C}$ as a multiple
eigenvalue. In particular, generalizing Malyshev's methodology,
they computed lower and upper bounds for this distance,
constructing an associated perturbation of $P(\lambda)$ for the
upper bound. Motivated by the above, in this note, we study the
case of weakly normal matrix polynomials. In the next section, we
give some definitions and present briefly some of the results of
\cite{papanormal,papa}. We also give an example of a normal matrix
polynomial where the method described in \cite{papa} for the
computation of the upper bound is not directly applicable. In
Section \ref{s3}, we prove that the methodology of \cite{papa} for
the computation of the upper bound is indeed not directly
applicable to weakly normal matrix polynomials, and in Section
\ref{s4}, we obtain a modified procedure to improve the method.
The same numerical example is considered to illustrate the
validity of the proposed technique.

\section{Preliminaries}   \label{s2}

For $A_0,A_1,\dots,A_m\in\mathbb{C}^{n \times n}$, with $\det(A_m) \ne
0$, and a complex variable $\lambda$, we define the \textit{matrix
polynomial}
\begin{equation}  \label{plambda}
   P(\lambda ) = A_m \lambda ^m  + A_{m - 1} \lambda ^{m - 1}  + \cdots + A_1 \lambda  + A_0 .
\end{equation}
The study of matrix polynomials, especially with regard to their
spectral analysis, has received a great deal of attention and has
been used in several applications
\cite{glr,kacz,lanc,markus,time}. Standard references for the
theory of matrix polynomials are \cite{glr,markus}. Here, some
definitions of matrix polynomials are briefly reviewed.

If for a scalar $\lambda_0 \in \mathbb{C}$ and some nonzero vector
$x_0 \in {\mathbb{C}^{n}}$, it holds that $P(\lambda_0) x_0 = 0$,
then the scalar $\lambda_0$ is called an \textit{eigenvalue} of
$P(\lambda)$ and the vector $x_0$ is known as a \textit{(right)
eigenvector} of $P(\lambda)$ corresponding to $\lambda_0$. The
\textit{spectrum} of $P(\lambda)$, denoted by $\sigma(P)$, is the
set of all eigenvalues of $P(\lambda)$. Since the leading
matrix-coefficient $A_m$ is nonsingular, the spectrum $\sigma(P)$
contains at most $mn$ distinct finite elements. The multiplicity
of an eigenvalue $\lambda_0 \in \sigma(P)$ as a root of the scalar
polynomial $\det P(\lambda)$ is said to be the \textit{algebraic
multiplicity} of $\lambda_0$, and the dimension of the null space
of the (constant) matrix $P(\lambda_0)$ is known as the
\textit{geometric multiplicity} of $\lambda_0$. The algebraic
multiplicity of an eigenvalue is always greater than or equal to
its geometric multiplicity. An eigenvalue is called
\textit{semisimple} if its algebraic and geometric multiplicities
are equal; otherwise, it is known as \textit{defective}.

\begin{definition}  \textup{
Let $P(\lambda)$ be a matrix polynomial as in (\ref{plambda}). If
there exists a unitary matrix $U \in \mathbb{C}^{n \times n}$ such
that $U^*P(\lambda)U$ is a diagonal matrix polynomial, then
$P(\lambda)$ is said to be \textit{weakly normal}. If, in
addition, all the eigenvalues of $P(\lambda)$ are semisimple, then
$P(\lambda)$ is called \textit{normal}.  }
\end{definition}

The suggested references on weakly normal and normal matrix
polynomials, and their properties are \cite{nn,papanormal}. Some
of the results of \cite{papanormal} are summarized in the next
proposition.

\begin{proposition} \textup{\cite{papanormal}} \label{weak}\
Let $P(\lambda) = A_m \lambda^m + \cdots + A_1 \lambda + A_0$ be a
matrix polynomial as in (\ref{plambda}). Then $P(\lambda)$ is
weakly normal if and only if one of the following (equivalent)
conditions holds.
\begin{description}
 \item[(i)  ] For every $\mu \in \mathbb{C}$, the matrix $P(\mu)$ is
              normal.
 \item[(ii) ] $A_0,A_1,\dots,A_m$ are normal and mutually commuting
              (i.e., $A_i A_j = A_j A_i$ for $i\ne j$).
 \item[(iii)] All the linear combinations of $A_0,A_1,\dots,A_m$ are normal matrices.
 \item[(iv) ] There exists a unitary matrix $U\in\mathbb{C}^{n \times n}$
              such that $U^* A_j U$ is diagonal for every $j=0,1,\dots,m$.
\end{description}
\end{proposition}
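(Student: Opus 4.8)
The plan is to reduce everything to condition (iv): I would first show that weak normality is equivalent to (iv), and then close the cycle (iv) $\Rightarrow$ (iii) $\Rightarrow$ (i) $\Rightarrow$ (ii) $\Rightarrow$ (iv), so that all four listed conditions are equivalent to one another and to weak normality. Most of the arrows in this cycle are pure bookkeeping; only one of them is the heart of the matter.

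First I would dispose of the routine implications. The equivalence between weak normality and (iv) is coefficient matching: if $U$ is unitary and $U^*P(\lambda)U$ is a diagonal matrix polynomial $\sum_j D_j\lambda^j$, then equating the coefficients of $\lambda^j$ gives $U^*A_jU=D_j$, which is diagonal; conversely, if each $U^*A_jU$ is diagonal then $U^*P(\lambda)U=\sum_j (U^*A_jU)\lambda^j$ is diagonal. For (iv) $\Rightarrow$ (iii): if $U^*A_jU$ is diagonal for every $j$, then for any scalars $c_0,\dots,c_m$ the matrix $U^*\big(\sum_j c_jA_j\big)U=\sum_j c_j(U^*A_jU)$ is diagonal, hence $\sum_j c_jA_j$ is normal. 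For (iii) $\Rightarrow$ (i): for fixed $\mu\in\mathbb{C}$, $P(\mu)=\sum_j \mu^j A_j$ is one particular linear combination of $A_0,\dots,A_m$, so it is normal. Finally (ii) $\Rightarrow$ (iv) is the classical spectral theorem for a commuting family of normal matrices (they admit a common orthonormal eigenbasis, equivalently a single unitary $U$ diagonalizing all the $A_j$ simultaneously), while trivially (iv) $\Rightarrow$ (ii) as well, since diagonal matrices are normal and commute.

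The \emph{substantive} step is (i) $\Rightarrow$ (ii). Since $P(\mu)^*=\sum_k \bar\mu^k A_k^*$, normality of $P(\mu)$ for every $\mu\in\mathbb{C}$ is the identity
\[
  \sum_{j,k}\mu^j\bar\mu^k\,\big(A_jA_k^*-A_k^*A_j\big)=0 \qquad\text{for all }\mu\in\mathbb{C}.
\]
Writing $\mu=re^{i\theta}$ and separating terms by the power of $r$ and by the frequency $e^{i(j-k)\theta}$ forces every coefficient to vanish, i.e. $A_jA_k^*=A_k^*A_j$ for all $j,k$. Taking $j=k$ shows each $A_j$ is normal; for general $j,k$, the matrix $A_k^*$ is normal and commutes with $A_j$, so by the Fuglede--Putnam theorem $A_j$ also commutes with $(A_k^*)^*=A_k$. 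This is exactly condition (ii).

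I expect this last implication to be the \emph{main obstacle}, for two reasons. First, one must legitimately pass from ``the polynomial identity holds for all $\mu\in\mathbb{C}$'' to ``equate the coefficients of $\mu^j\bar\mu^k$'', that is, treat $\mu$ and $\bar\mu$ as independent indeterminates; this rests on the elementary but not entirely obvious fact that a two-variable polynomial vanishing on the real slice $\{(\mu,\bar\mu):\mu\in\mathbb{C}\}$ vanishes identically, which should be made explicit. Second, upgrading the ``mixed'' relations $A_jA_k^*=A_k^*A_j$ to honest commutativity $A_jA_k=A_kA_j$ genuinely requires Fuglede--Putnam (equivalently, the remark that for a normal matrix $N$ the adjoint $N^*$ is a polynomial in $N$, so any matrix commuting with $A_k^*$ also commutes with $A_k$). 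Everything else in the proof is direct verification.
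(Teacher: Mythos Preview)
The paper does not actually prove this proposition; it is quoted verbatim from \cite{papanormal} (reference~[13]) and used as a black box, so there is no ``paper's own proof'' to compare against. Your argument is nonetheless correct and self-contained: the equivalence of weak normality with (iv) by coefficient matching, the chain (iv)$\Rightarrow$(iii)$\Rightarrow$(i), and (ii)$\Leftrightarrow$(iv) via simultaneous unitary diagonalization of commuting normal matrices are all routine; your handling of the key step (i)$\Rightarrow$(ii) is also sound, since the monomials $\mu^{j}\bar\mu^{k}$ are linearly independent as functions on $\mathbb{C}$ (the map $(j,k)\mapsto(j+k,j-k)$ is injective, so each $r^{j+k}e^{i(j-k)\theta}$ appears once), and the upgrade from $A_jA_k^{*}=A_k^{*}A_j$ to $A_jA_k=A_kA_j$ follows from Fuglede--Putnam (or from the fact that $A_k^{*}$ is a polynomial in the normal matrix $A_k$). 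This is essentially the argument given in the original source~\cite{papanormal}.
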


As mentioned, Papathanasiou and Psarrakos \cite{papa} introduced a
spectral norm distance from a matrix polynomial $P(\lambda)$ to
the matrix polynomials that have $\mu$ as a multiple eigenvalue,
and computed lower and upper bounds for this distance. Consider
(additive) perturbations of $P(\lambda)$ of the form
\begin{equation} \label{eq:polyQ}
  Q(\lambda) = P(\lambda) + \Delta (\lambda)
  = (A_m+\Delta_m) \lambda ^m  + \cdots + (A_1+\Delta_1) \lambda  + A_0 + \Delta_0 ,
\end{equation}
where the matrices $\Delta_0 , \Delta_1 , \dots , \Delta_m \in \mathbb{C}^{n
\times n}$ are arbitrary. For a given parameter $\epsilon >0$ and a
given set of nonnegative weights $\textup{w} = \{ w_0, w_1, \dots , w_m
\}$ with $w_0>0$, define the class of admissible perturbed
matrix polynomials
\[
    \mathcal{B}(P,\epsilon,\textup{w}) =
    \left \{ Q(\lambda) \; \mbox{as in} \;(\ref{eq:polyQ}) :
    \| \Delta_j \|_2 \le \epsilon \, w_j ,\, j=0,1,\dots ,m \right \}
\]
and the scalar polynomial
$w(\lambda) = w_m \lambda^m + w_{m-1} \lambda^{m-1} + \cdots +
w_1 \lambda + w_0$. Note that the weights $w_0,w_1,\dots,w_m$ allow
freedom in how perturbations are measured.

For any real number $\gamma \in [ 0 , + \infty )$, we define the $2n\times 2n$
matrix polynomial
\[
 F\left[ {P(\lambda );\gamma } \right] = {\left[ {\begin{array}{*{20}{c}}
 {P(\lambda )}&0 \\  {\gamma P'(\lambda )}&{P(\lambda )} \end{array}} \right ]} ,
\]
where $P'(\lambda )$ denotes the derivative of $P(\lambda )$ with
respect to $\lambda$.

\begin{lemma}  \textup{\cite[Lemma 17]{papa}} \label{dotaee}
Let $\mu \in \mathbb{C}$ and $\gamma_*> 0$ be a point where the singular value $s_{2n -
1} (F[P(\mu );\gamma])$ attains its maximum value, and denote $s_* = s_{2n -
1} (F[P(\mu);\gamma_*])>0$. Then there exists a pair $\left[
{\begin{array}{*{20}c}
   {u_1 (\gamma _* )}  \\
   {u_2 (\gamma _* )}  \\
\end{array}} \right], \left[ {\begin{array}{*{20}c}
   {v_1 (\gamma _* )}  \\
   {v_2 (\gamma _* )}  \\
\end{array}} \right] \in\mathbb{C} ^{2n}~( u_k (\gamma _* ),v_k (\gamma _* ) \in \mathbb{C}^n,~k =
1,2)$ of left and right singular vectors of $F[P(\mu);\gamma_*]$ corresponding to $s_*$, respectively,
such that
 \begin{description}
 \item[(1)] $u^* _2 (\gamma_*) P'(\mu) v_1 (\gamma_* ) = 0$, and
 \item[(2)] the $\,n\times 2\,$ matrices $\, U(\gamma_* ) = \left [ u_1 (\gamma_*)~u_2 (\gamma_*) \right]\,$ and
           $\,V(\gamma_* ) = \left[ v_1(\gamma_* ) ~ v_2 (\gamma_* ) \right]\,$ satisfy
           $\,U^*(\gamma_* ) U(\gamma_* ) = V^* (\gamma_* ) V(\gamma_* )$.
 \end{description}
Moreover, it is remarkable that (1) implies (2) (see the proof of
Lemma 17 in \cite{papa}).
\end{lemma}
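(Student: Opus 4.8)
The plan is to study the scalar function $h(\gamma)=s_{2n-1}\!\left(F[P(\mu);\gamma]\right)$, use a rotational symmetry of $F[P(\mu);\gamma]$ in the parameter $\gamma$ to turn the one–dimensional maximisation over $\gamma\in[0,\infty)$ into an unconstrained one over $\gamma\in\mathbb{C}$, and then read off (1) as a first–order optimality condition at $\gamma_*$; condition (2) will drop out algebraically from (1) together with the block structure of $F[P(\mu);\gamma_*]$. Set $M(\gamma):=F[P(\mu);\gamma]=\left[\begin{smallmatrix}P(\mu)&0\\ \gamma P'(\mu)&P(\mu)\end{smallmatrix}\right]$ and $D_\theta:=\mathrm{diag}(I_n,e^{i\theta}I_n)$. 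A direct computation gives $D_\theta M(\gamma)D_\theta^{*}=M(e^{i\theta}\gamma)$; since $D_\theta$ is unitary the singular values are unchanged, so $h$ extends to a function of $\gamma\in\mathbb{C}$ depending only on $|\gamma|$, and if $\left[\begin{smallmatrix}u_1\\ u_2\end{smallmatrix}\right],\left[\begin{smallmatrix}v_1\\ v_2\end{smallmatrix}\right]$ is a pair of left/right singular vectors of $M(\gamma)$ for $s_*$, then $\left[\begin{smallmatrix}u_1\\ e^{i\theta}u_2\end{smallmatrix}\right],\left[\begin{smallmatrix}v_1\\ e^{i\theta}v_2\end{smallmatrix}\right]$ is one for $M(e^{i\theta}\gamma)$. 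In particular $\gamma_*$, being a maximiser of $h$ over the ray $[0,\infty)$, is a maximiser of $h$ over all of $\mathbb{C}\cong\mathbb{R}^2$.

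Next, suppose first that $s_*$ is a \emph{simple} singular value of $M(\gamma_*)$, with unit left/right singular vectors $u=\left[\begin{smallmatrix}u_1\\ u_2\end{smallmatrix}\right]$, $v=\left[\begin{smallmatrix}v_1\\ v_2\end{smallmatrix}\right]$. Then $h$ is real–analytic near $\gamma_*$, and writing $\gamma=x+iy$ and using $\partial_x M=\left[\begin{smallmatrix}0&0\\ P'(\mu)&0\end{smallmatrix}\right]$, $\partial_y M=\left[\begin{smallmatrix}0&0\\ iP'(\mu)&0\end{smallmatrix}\right]$, the standard first–order formula for a simple singular value gives $\partial_x h=\mathrm{Re}\!\left(u^{*}(\partial_x M)v\right)=\mathrm{Re}\!\left(u_2^{*}P'(\mu)v_1\right)$ and $\partial_y h=\mathrm{Re}\!\left(i\,u_2^{*}P'(\mu)v_1\right)=-\mathrm{Im}\!\left(u_2^{*}P'(\mu)v_1\right)$. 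Both vanish at the interior maximiser $\gamma_*$, whence $u_2^{*}P'(\mu)v_1=0$, which is (1). (Equivalently, $\partial_x h(\gamma_*)=0$ because $h$ is maximal on $(0,\infty)$ at $\gamma_*$, and $\partial_y h(\gamma_*)=0$ because $h$ is \emph{constant} on the circle $|\gamma|=\gamma_*$.)

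The genuine difficulty is the non–generic case in which $s_*$ is a \emph{multiple} singular value of $M(\gamma_*)$, say of multiplicity $r\ge 2$, for then $h$ need not be differentiable and one must use the perturbation theory of clustered singular values. Let $\widehat U,\widehat V\in\mathbb{C}^{2n\times r}$ have orthonormal columns spanning the left/right $s_*$–singular subspaces with $M(\gamma_*)\widehat V=s_*\widehat U$ (hence also $M(\gamma_*)^{*}\widehat U=s_*\widehat V$), partition $\widehat U=\left[\begin{smallmatrix}\widehat U_1\\ \widehat U_2\end{smallmatrix}\right]$, $\widehat V=\left[\begin{smallmatrix}\widehat V_1\\ \widehat V_2\end{smallmatrix}\right]$, and put $K:=\widehat U_2^{\,*}P'(\mu)\widehat V_1\in\mathbb{C}^{r\times r}$, so that for every unit $a\in\mathbb{C}^{r}$ the vectors $u=\widehat U a$, $v=\widehat V a$ form a singular pair for $s_*$ with $u_2^{*}P'(\mu)v_1=a^{*}Ka$. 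Along a real–analytic path the derivatives of the $r$ singular values emanating from $s_*$ equal the eigenvalues of the Hermitian compression of the path's velocity to $\widehat U,\widehat V$: for the $\mathrm{Re}\,\gamma$ direction this compression is $\tfrac12(K+K^{*})$, and for rotation along the circle $|\gamma|=\gamma_*$ (on which $h\equiv s_*$) it is $\tfrac{i\gamma_*}{2}(K-K^{*})$. After the index bookkeeping that matches the label $2n-1$ to the correct order statistic inside the cluster (the two possibilities are $s_{2n}<s_*$ and $s_{2n}=s_*$), constancy of $h$ on the circle forces the skew part of $K$ to vanish, so $K$ is Hermitian, and maximality of $h$ on $(0,\infty)$ forces $\lambda_{\min}(K)\le 0\le\lambda_{\max}(K)$; thus $0$ lies in the numerical range of the Hermitian matrix $K$, and a unit $a$ with $a^{*}Ka=0$ yields the required pair satisfying (1). (Alternatively one may perturb $P$ to make $s_*$ simple, apply the generic case, and pass to a limit, since (1) and (2) are closed conditions — but this needs control of the location of $\gamma_*$ under the perturbation.) The step I expect to be the main obstacle is exactly this bookkeeping: pinning down which eigenvalue of the compression optimality forces to $0$, and checking that the $r$–dimensional singular subspace leaves enough room to make $a^{*}Ka=0$ simultaneously in its real and imaginary parts when these do not both vanish identically.

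Finally, for (1)$\Rightarrow$(2), write $M(\gamma_*)v=s_*u$ and $M(\gamma_*)^{*}u=s_*v$ block by block:
\[
 P(\mu)v_1=s_*u_1,\quad \gamma_*P'(\mu)v_1+P(\mu)v_2=s_*u_2,\quad P(\mu)^{*}u_1+\gamma_*P'(\mu)^{*}u_2=s_*v_1,\quad P(\mu)^{*}u_2=s_*v_2 .
\]
Left–multiplying the third equation by $v_1^{*}$ and using $v_1^{*}P(\mu)^{*}=(P(\mu)v_1)^{*}=s_*u_1^{*}$ together with $v_1^{*}P'(\mu)^{*}u_2=\overline{u_2^{*}P'(\mu)v_1}=0$ gives $u_1^{*}u_1=v_1^{*}v_1$; left–multiplying the second by $u_2^{*}$ and using $u_2^{*}P'(\mu)v_1=0$ and $u_2^{*}P(\mu)=(P(\mu)^{*}u_2)^{*}=s_*v_2^{*}$ gives $u_2^{*}u_2=v_2^{*}v_2$; and left–multiplying the first by $u_2^{*}$ and again using $u_2^{*}P(\mu)=s_*v_2^{*}$ gives $u_2^{*}u_1=v_2^{*}v_1$. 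These three scalar identities are precisely the entries of $U^{*}(\gamma_*)U(\gamma_*)=V^{*}(\gamma_*)V(\gamma_*)$, which is (2); this last step is a short direct computation, the substance of the argument being the optimality analysis above.
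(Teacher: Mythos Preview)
The paper does not give its own proof of this lemma: it is quoted verbatim from \cite[Lemma~17]{papa}, and for the implication (1)$\Rightarrow$(2) it simply refers the reader there. The natural comparison point is instead Section~4 and Lemma~4.1 of the present paper, which effectively reconstructs the existence part of the lemma in the case where $s_*$ has multiplicity $r\ge 2$, and there your route and the paper's genuinely diverge.

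For the Hermiticity of the $r\times r$ compression $K=\widehat U_2^{\,*}P'(\mu)\widehat V_1$ (the paper's $M$ or $M_r$), you exploit the unitary equivalence $D_\theta M(\gamma)D_\theta^{*}=M(e^{i\theta}\gamma)$: constancy of all $r$ clustered singular values along the circle $|\gamma|=\gamma_*$ forces the Hermitian part of $i\gamma_*K$ to vanish, hence $K=K^{*}$. The paper instead proves Hermiticity by bare hands (Lemma~4.1), manipulating the block singular-vector relations to show directly that $\gamma_* u_2^{(i)*}P'(\mu)v_1^{(j)}=s_*\bigl(u_2^{(i)*}u_2^{(j)}-v_2^{(i)*}v_2^{(j)}\bigr)$, from which both the reality of the diagonal and the conjugate symmetry of the off-diagonal follow. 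Your symmetry argument is cleaner and explains \emph{why} $K$ is Hermitian; the paper's computation is more elementary but opaque. For indefiniteness of $K$, the paper simply invokes the existence assertion of \cite[Lemma~17]{papa} itself (``equation~(\ref{alphabeta}) has always a nontrivial solution''), so its Section~4 is not a self-contained proof; your optimality argument ($\lambda_{\min}(K)\le 0\le\lambda_{\max}(K)$ from maximality of $s_{2n-1}$ on $(0,\infty)$) is the genuinely independent step, and the bookkeeping you flag --- matching the label $2n-1$ to the correct order statistic within the cluster, especially when $s_{2n}=s_*$ --- is precisely the point the paper avoids by deferring to \cite{papa}. Your derivation of (1)$\Rightarrow$(2) from the four block equations is correct and is essentially what \cite{papa} does; the present paper does not reproduce it.

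In short: your proposal is a sound and largely self-contained proof sketch that goes beyond what the paper itself supplies; the one soft spot you identify (the index bookkeeping in the degenerate multiple case) is real, and neither this paper nor your sketch fully closes it without appeal to \cite{papa}.
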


Consider the quantity
$\phi = \frac{{w'(\left| \mu \right|)}}{{w(\left| \mu \right|)}}
\frac{{\bar \mu }}{{\left| \mu  \right|}}$,
where, by convention, we set $\frac{{\bar \mu }}{{\left| \mu  \right|}}=0$
whenever $\mu=0$. Let also ${V}({\gamma_*})^\dag$ be the \emph{Moore-Penrose pseudoinverse}
of ${V}({\gamma_*})$. For the pair of singular vectors $\left[
{\begin{array}{*{20}c}
   {u_1 (\gamma _* )}  \\
   {u_2 (\gamma _* )}  \\
\end{array}} \right], \left[ {\begin{array}{*{20}c}
   {v_1 (\gamma _* )}  \\
   {v_2 (\gamma _* )}  \\
\end{array}} \right] \in\mathbb{C} ^{2n}$ of Lemma \ref{dotaee},
define the $n\times n$ matrix
\begin{equation*}
  {\Delta _{{\gamma _*}}} =  - {s_*}U({\gamma _*})
   \left[ {\begin{array}{*{20}{c}}
   1  &  {-{\gamma_*} \phi } \\  0 & 1
   \end{array}} \right]V{({\gamma _*})^\dag } .
\end{equation*}

\begin{theorem} \textup{\cite[Theorem 19]{papa}} \label{amultistar}
Let $P(\lambda)$ be a matrix polynomial as in (\ref{plambda}), and let
$\textup{w} = \{ w_0, w_1, \dots , w_m \}$, with $w_0>0$, be a set of
nonnegative weights. Suppose that $\mu\in\mathbb{C} \backslash \sigma(P')$,
$\gamma_*> 0$ is a point where the singular value $s_{2n - 1} (F[P(\mu );\gamma])$
attains its maximum value, and $s_* = s_{2n - 1} (F[P(\mu);\gamma_*])>0$.
Then, for the pair of singular vectors $\left [  \begin{array}{c}  u_1(\gamma_*)
\\ u_2(\gamma_*) \end{array} \right ] , \left[ \begin{array}{c} v_1(\gamma_*) \\
v_2(\gamma_*)  \end{array} \right ] \in \mathbb{C}^{2n}$
of Lemma \ref{dotaee}, we have
\begin{eqnarray*}
   & &  \min \left \{ \epsilon \geq 0 : \exists \;
      Q(\lambda) \in \mathcal{B}(P,\epsilon,\textup{w})
      \;\mbox{with $\mu$ as a multiple eigenvalue} \right \}  \\
   & &  \le \,
      \frac{s_*}{w(|\mu|)} \left \| V(\gamma_*) \left [
      \begin{array}{cc}
           1    &  -\gamma_*\,\phi  \\
           0    &   1   \\
      \end{array} \right ]
      V(\gamma_*)^{\dagger} \right \|  .
\end{eqnarray*}
Moreover, the perturbed matrix polynomial
\begin{equation} \label{eq:pertQgastar}
         Q_{\gamma_*}(\lambda)
      =  P(\lambda) +  \Delta_{\gamma_{*}}(\lambda)
      =  P(\lambda) + \sum_{j=0}^m  \frac{w_j}{w(|\mu|)}
         \left ( \frac{\overline{\mu}}{|\mu|} \right )^j
         \Delta_{\gamma_{*}} \, \lambda^j ,
\end{equation}
lies on the boundary of the set $\,\mathcal{B} \left( P ,
 \frac{s_*}{w(|\mu|)} \left \| V(\gamma_*) \left [
      \begin{array}{cc}
           1    &  -\gamma_*\,\phi  \\
           0    &   1   \\
      \end{array} \right ]
      V(\gamma_*)^{\dagger} \right \| , \textup{w} \right)$
and has $\mu$ as a (multiple) defective eigenvalue.
\end{theorem}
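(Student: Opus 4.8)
The plan is to exhibit an explicit admissible perturbation $\Delta_{\gamma_*}(\lambda)$ of the stated form, verify that its weighted norm bound holds, and show that the perturbed polynomial $Q_{\gamma_*}(\lambda)$ genuinely has $\mu$ as a defective multiple eigenvalue. First I would record the structural consequences of Lemma \ref{dotaee}: writing $s_* = s_{2n-1}(F[P(\mu);\gamma_*])$ and picking the singular vectors $\left[\begin{array}{c} u_1 \\ u_2 \end{array}\right]$, $\left[\begin{array}{c} v_1 \\ v_2 \end{array}\right]$ guaranteed there (I will suppress the argument $\gamma_*$), the singular value equations for $F[P(\mu);\gamma_*]$ read $P(\mu)v_1 = s_* u_1$, $\gamma_* P'(\mu)v_1 + P(\mu)v_2 = s_* u_2$, together with the transposed relations $P(\mu)^* u_2 = s_* v_2$ and $\gamma_* P'(\mu)^* u_2 + P(\mu)^* u_1 = s_* v_1$. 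Property (1) of Lemma \ref{dotaee} is $u_2^* P'(\mu) v_1 = 0$, and property (2) says $U^*U = V^*V$, where $U = [\,u_1\ u_2\,]$, $V = [\,v_1\ v_2\,]$.

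Next I would define the constant matrix $\Delta_{\gamma_*} = -s_* U \left[\begin{array}{cc} 1 & -\gamma_*\phi \\ 0 & 1 \end{array}\right] V^\dagger$ and the $\lambda$-dependent perturbation $\Delta_{\gamma_*}(\lambda) = \sum_{j=0}^m \frac{w_j}{w(|\mu|)}\left(\frac{\overline\mu}{|\mu|}\right)^j \Delta_{\gamma_*}\lambda^j$, so that its $j$-th coefficient is $\Delta_j = \frac{w_j}{w(|\mu|)}\left(\frac{\overline\mu}{|\mu|}\right)^j \Delta_{\gamma_*}$, whence $\|\Delta_j\|_2 \le \frac{w_j}{w(|\mu|)}\|\Delta_{\gamma_*}\|$ with equality for at least one index (giving the ``boundary'' claim once we identify $\epsilon$). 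Since $|\overline\mu/|\mu|| = 1$, a direct computation gives $\Delta_{\gamma_*}(\mu) = \frac{1}{w(|\mu|)}\left(\sum_j w_j|\mu|^j\right)\Delta_{\gamma_*} = \Delta_{\gamma_*}$, and differentiating term-by-term and evaluating at $\mu$ produces $\Delta'_{\gamma_*}(\mu) = \frac{1}{w(|\mu|)}\left(\sum_j j\,w_j|\mu|^{j-1}\right)\frac{\overline\mu}{|\mu|}\Delta_{\gamma_*} = \phi\,\Delta_{\gamma_*}$, using the definition $\phi = \frac{w'(|\mu|)}{w(|\mu|)}\frac{\overline\mu}{|\mu|}$ (the case $\mu=0$ is handled by the stated convention). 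Thus $Q_{\gamma_*}(\mu) = P(\mu)+\Delta_{\gamma_*}$ and $Q'_{\gamma_*}(\mu) = P'(\mu)+\phi\,\Delta_{\gamma_*}$.

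The heart of the argument is then to show that $\det Q_{\gamma_*}(\lambda)$ has a root of multiplicity at least two at $\mu$, equivalently that $Q_{\gamma_*}(\mu)$ is singular and the pencil $F[Q_{\gamma_*}(\mu);\gamma_*] = \left[\begin{array}{cc} Q_{\gamma_*}(\mu) & 0 \\ \gamma_* Q'_{\gamma_*}(\mu) & Q_{\gamma_*}(\mu)\end{array}\right]$ is singular (this is the standard linearization criterion for a multiple eigenvalue used in \cite{papa}; I would cite it rather than reprove it). The key algebraic fact is that $\Delta_{\gamma_*}$ acts as a ``correction'' that annihilates $s_*$: using $V^\dagger V = $ (orthogonal projection onto $\mathrm{range}\,V^*$) and the fact that $v_1, v_2$ span that range, one checks $\Delta_{\gamma_*} v_1 = -s_* u_1$ and $\Delta_{\gamma_*} v_2 = -s_*(u_2 - \gamma_*\phi\, u_1)$ — here the identity $U^*U = V^*V$ is what makes $U(\cdots)V^\dagger$ behave correctly on the relevant subspace and, combined with $u_2^* P'(\mu) v_1 = 0$, is exactly the ingredient that forces the cross term to vanish. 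Consequently $Q_{\gamma_*}(\mu) v_1 = P(\mu)v_1 + \Delta_{\gamma_*} v_1 = s_* u_1 - s_* u_1 = 0$, so $v_1$ is an eigenvector; and feeding $[v_1; v_2]$ (suitably combined) into $F[Q_{\gamma_*}(\mu);\gamma_*]$ and using $Q'_{\gamma_*}(\mu) = P'(\mu)+\phi\Delta_{\gamma_*}$ together with the singular-value relations and $u_2^*P'(\mu)v_1=0$ shows the augmented matrix kills a nonzero vector, giving the defective (non-semisimple) multiple eigenvalue. Finally, setting $\epsilon_* = \frac{s_*}{w(|\mu|)}\left\|V\left[\begin{array}{cc} 1 & -\gamma_*\phi \\ 0 & 1\end{array}\right]V^\dagger\right\|$ and noting $\|\Delta_{\gamma_*}\| = s_*\left\|U\left[\begin{array}{cc} 1 & -\gamma_*\phi \\ 0 & 1\end{array}\right]V^\dagger\right\| = s_*\left\|V\left[\begin{array}{cc} 1 & -\gamma_*\phi \\ 0 & 1\end{array}\right]V^\dagger\right\|$ (again by $U^*U=V^*V$, which makes the two products have equal norm) yields both the displayed upper bound on the minimal $\epsilon$ and membership of $Q_{\gamma_*}$ on the boundary of $\mathcal{B}(P,\epsilon_*,\textup{w})$. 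The main obstacle I anticipate is the bookkeeping in the second bullet — verifying that the off-diagonal block of $F[Q_{\gamma_*}(\mu);\gamma_*]$ is handled correctly so that a common eigenvector structure survives the perturbation; this is precisely where properties (1) and (2) of Lemma \ref{dotaee} must be used in tandem, and where the pseudoinverse $V^\dagger$ (rather than an inverse) requires care since $V$ need not have full column rank.
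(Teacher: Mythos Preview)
The paper does not contain a proof of this theorem. Theorem~\ref{amultistar} is stated in Section~\ref{s2} (``Preliminaries'') purely as background, with an explicit citation to \cite[Theorem~19]{papa}; immediately after the statement the paper moves on to a numerical example illustrating where the \emph{application} of the theorem can fail. So there is no ``paper's own proof'' to compare your proposal against.

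That said, your sketch is a faithful reconstruction of the argument one finds in \cite{papa}. The computations $\Delta_{\gamma_*}(\mu)=\Delta_{\gamma_*}$ and $\Delta'_{\gamma_*}(\mu)=\phi\,\Delta_{\gamma_*}$ are correct, and from them the Jordan-chain verification
\[
Q_{\gamma_*}(\mu)v_1=0,\qquad \gamma_* Q'_{\gamma_*}(\mu)v_1 + Q_{\gamma_*}(\mu)v_2=0
\]
goes through exactly as you outline, once one knows $V^\dagger v_1=e_1$ and $V^\dagger v_2=e_2$. Two small clarifications: (i) that last identity requires $V$ to have full column rank~$2$, which in \cite{papa} is established separately (and is where the hypothesis $\mu\notin\sigma(P')$ and $s_*>0$ enter); your closing caveat about the pseudoinverse is therefore well placed but the resolution is that full column rank is in fact guaranteed. (ii) Property~(1) of Lemma~\ref{dotaee} is not needed directly in the Jordan-chain algebra---it is used in \cite{papa} only to derive property~(2), which in turn is what gives the norm identity $\bigl\|U\left[\begin{smallmatrix}1&-\gamma_*\phi\\0&1\end{smallmatrix}\right]V^\dagger\bigr\|=\bigl\|V\left[\begin{smallmatrix}1&-\gamma_*\phi\\0&1\end{smallmatrix}\right]V^\dagger\bigr\|$ at the end. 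Your phrasing suggests both are used in the eigenvalue step; only~(2) is really needed, and only for the norm bound.
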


Some numerical examples in Section 8 of \cite{papa} illustrate the effectiveness of
the upper bound of Theorem \ref{amultistar}. In all these examples, $s_*$ is a
simple singular value, and consequently, the singular vectors $\left [  \begin{array}{c}
u_1(\gamma_*) \\ u_2(\gamma_*) \end{array} \right ] , \left[ \begin{array}{c} v_1(\gamma_*) \\
v_2(\gamma_*)  \end{array} \right ] \in \mathbb{C}^{2n}$ of Lemma
\ref{dotaee} are directly computable (due to their essential
uniqueness). Let us now consider the normal (in particular,
diagonal) matrix polynomial
\begin{equation}\label{pnormal}
 P(\lambda ) = \left[ {\begin{array}{*{20}{c}}
 1&0&0\\
 0&1&0\\
 0&0&1
 \end{array}} \right]{\lambda ^2} + \left[ {\begin{array}{*{20}{c}}
 { - 3}&0&0\\
 0&{ - 1}&0\\
 0&0&3
 \end{array}} \right]\lambda  + \left[ {\begin{array}{*{20}{c}}
 2&0&0\\
 0&0&0\\
 0&0&2
 \end{array}} \right]
\end{equation}
that is borrowed from \cite[Section 3]{papanormal}. Let also the
set of weights $w = \left\{ {1,1,1} \right\}$ and the scalar
$\mu=-4$. The singular value $s_{5}(F[P(-4);\gamma])$ attains its
maximum value at $\gamma_{*}=2.0180$, and at this point, we have
$s_* = s_{5}(F[P(-4);2.0180]) = s_{4}(F[P(-4);2.0180]) = 12.8841$;
i.e., $s_*$ is a multiple singular value of matrix
$F[P(-4);2.0180]$. A left and a right singular vectors of
$F[P(-4);2.0180]$ corresponding to $s_*$ are
\[
   \left[ {\begin{array}{*{20}c}
   {u_1 (\gamma _* )}  \\
   {u_2 (\gamma _* )}  \\
   \end{array}} \right] = \left[  \begin{array}{c}
    0  \\  0.8407  \\  0  \\  0  \\  0.5416 \\ 0
   \end{array} \right]  \;\;\; \mbox{and} \;\;\;
   \left[ {\begin{array}{*{20}c}
   {v_1 (\gamma _* )}  \\
   {v_2 (\gamma _* )}  \\
   \end{array}} \right] = \left[  \begin{array}{c}
    0  \\  0.5416  \\  0  \\  0  \\  0.8407 \\ 0
   \end{array} \right] ,
\]
respectively, and they yield the perturbed matrix polynomial (see
(\ref{eq:pertQgastar}))
\begin{eqnarray*}
 {Q_{{\gamma _*}}}(\lambda ) = \left[ {\begin{array}{*{20}{c}}
 1&0&0\\
 0&{0.0664}&0\\
 0&0&1
 \end{array}} \right]{\lambda ^2} + \left[ {\begin{array}{*{20}{c}}
 { - 3}&0&0\\
 0&{ - 0.0664}&0\\
 0&0&3
 \end{array}} \right]\lambda  +\left[ {\begin{array}{*{20}{c}}
 2&0&0\\
 0&{ - 0.9336}&0\\
 0&0&2
 \end{array}} \right] .
\end{eqnarray*}
One can see that $\mu=-4$ is not a multiple eigenvalue of
$Q_{{\gamma _*}}(\lambda )$. Moreover, properties (1) and (2) of
Lemma \ref{dotaee} do not hold since $u_2^*({\gamma_*}) P'(\mu)
v_1({\gamma_*}) = -2.6396 \ne 0$ and $\left\| U^* ({\gamma_*})
U({\gamma_*}) - V^*({\gamma_*}) V({\gamma_*}) \right\|_2 = 0.4134
\ne 0$.

Clearly, this example verifies that the computation of appropriate
singular vectors which satisfy (1) and (2) of Lemma \ref{dotaee} is
still an open problem when $s_*$ is a multiple singular value. In
the next section, we obtain that for weakly normal matrix polynomials,
$s_*$ is always a multiple singular value, and in Section \ref{s4}, we solve the problem
of calculation of the desired singular vectors of Lemma \ref{dotaee}.

\section{Weakly normal matrix polynomials}  \label{s3}

In this section, by extending the analysis performed in
\cite{onaremarkable}, we prove that $s_*$ is always a multiple
singular value of $F[P(\mu);\gamma_*]$ when $P(\lambda)$ is a
weakly normal matrix polynomial.

Let $P(\lambda)$ be a weakly normal matrix polynomial, and let
$\mu\in\mathbb{C} \backslash \sigma(P')$. By Proposition
\ref{weak}\,(iv), it follows that there exists a unitary matrix
$U\in\mathbb{C}^{n\times n}$ such that all matrices $U^*A_0 U,
U^*A_1 U,\dots, U^*A_m U$ are diagonal. Hence, $U^*P(\mu)U$ and
$U^*P(\mu)'U$ are also diagonal matrices; in particular,
\[
   U^* P(\mu) U =   \textup{diag} \{ \zeta_1 , \zeta_2 , \dots , \zeta_n \}
   \;\;\; \mbox{and} \;\;\;
   U^* P(\mu)' U =  \textup{diag} \{ \xi_1 , \xi_2 , \dots , \xi_n \}  ,
\]
where all scalars $\xi_1,\xi_2,\dots,\xi_n \in\mathbb{C}$ are
\textit{nonzero} (recall that $P'(\mu)$ is nonsingular) and,
without loss of generality, we assume that
\[
    \left| \zeta_1 \right| \geq \left| \zeta_2 \right| \geq \cdots \geq \left| \zeta_n \right| .
\]
As a consequence,
\begin{eqnarray*}
   \left[ {\begin{array}{*{20}{c}}  U^* & 0 \\ 0 & U^* \end{array}} \right]
   F[P(\mu);\gamma ]
   \left[ {\begin{array}{*{20}{c}}  U & 0 \\ 0 & U \end{array}} \right]
   &=& \left[ {\begin{array}{*{20}{c}}  U^* & 0 \\ 0 & U^* \end{array}} \right]
       \left[ {\begin{array}{*{20}{c}}
       {P(\mu )} & 0 \\ {\gamma P'(\mu)} & {P(\mu)} \end{array}} \right]
       \left[ {\begin{array}{*{20}{c}}  U & 0 \\ 0 & U \end{array}} \right] \\
   &=& \left[ {\begin{array}{*{20}{c}}
       \textup{diag} \{ \zeta_1 , \zeta_2 , \dots , \zeta_n \} & 0 \\
       {\gamma\, \textup{diag} \{ \xi_1 , \xi_2 , \dots , \xi_n \}} &
       \textup{diag} \{ \zeta_1 , \zeta_2 , \dots , \zeta_n \} \end{array}} \right] .
\end{eqnarray*}

It is straightforward to verify that there is a $2n \times 2n$ permutation matrix $R$ such that
\begin{eqnarray*}
  & &   R  \left[ {\begin{array}{*{20}{c}}
        \textup{diag} \{ \zeta_1 , \zeta_2 , \dots , \zeta_n \} & 0 \\
        {\gamma\, \textup{diag} \{ \xi_1 , \xi_2 , \dots , \xi_n \}} &
        \textup{diag} \{ \zeta_1 , \zeta_2 , \dots , \zeta_n \} \end{array}} \right]
        R^T               \\
  & &   = \left[ \begin{array}{*{20}c} \zeta_1 & 0 \\ \gamma\,\xi_1 & \zeta_1 \end{array} \right]
        \oplus \left[ \begin{array}{*{20}c} \zeta_2 & 0 \\ \gamma\,\xi_2  & \zeta_2 \end{array} \right]
        \oplus \, \cdots \, \oplus
        \left[ \begin{array}{*{20}c} \zeta_n & 0 \\ \gamma\,\xi_n  & \zeta_n\end{array} \right]  .
\end{eqnarray*}
The fact that singular values of a matrix are invariant under
unitary similarity implies that the $2n \times 2n$ matrices
\[
  F[P(\mu);\gamma ]  \;\;\;\; \mbox{and} \;\;\;\;
  \left[ \begin{array}{*{20}c} \zeta_1 & 0 \\ \gamma\,\xi_1 & \zeta_1 \end{array} \right]
  \oplus \left[ \begin{array}{*{20}c} \zeta_2 & 0 \\ \gamma\,\xi_2  & \zeta_2 \end{array} \right]
  \oplus \, \cdots \, \oplus
  \left[ \begin{array}{*{20}c} \zeta_n & 0 \\ \gamma\,\xi_n  & \zeta_n\end{array} \right]
\]
have the same singular values.  Therefore, in what follows, we are
focused on the singular values of $\left[ \begin{array}{*{20}c}
\zeta_1 & 0 \\ \gamma\,\xi_1 & \zeta_1 \end{array} \right] \oplus
\left[ \begin{array}{*{20}c} \zeta_2 & 0 \\ \gamma\,\xi_2  &
\zeta_2 \end{array} \right] \oplus \cdots \oplus \left[
\begin{array}{*{20}c} \zeta_n & 0 \\ \gamma\,\xi_n  &
\zeta_n\end{array} \right]$, which are the union of the singular
values of $\,\left[ {\begin{array}{*{20}c} {\zeta_i } & 0  \\
\gamma\,\xi_i  & \zeta_i \end{array}} \right]$, $\,i=1,2,\dots,n$.

For any $i=1,2,\dots,n$, let $s_{i,1}(\gamma)\geq s_{i,2}(\gamma)$ be the singular values of
$\left[ {\begin{array}{*{20}c} {\zeta_i } & 0  \\ \gamma\,\xi_i  & \zeta_i \end{array}} \right]$,
and consider the characteristic polynomial of matrix
\[
 \left[ {\begin{array}{*{20}c} {\zeta_i } & 0  \\
 \gamma\,\xi_i  & \zeta_i \end{array}} \right]^* \left[ {\begin{array}{*{20}c} {\zeta_i } & 0  \\
 \gamma\,\xi_i  & \zeta_i \end{array}} \right] = \left[ {\begin{array}{*{20}c}
   \left| \zeta_i \right|^2 + \gamma^2 \left| \xi_i \right|^2
   & \gamma\,\overline{\xi}_i \,\zeta_i   \\ \gamma \, \xi_i \,\overline{\zeta}_i
   & \left | \zeta_i \right |^2 \end{array}} \right] ,
\]
that is,
\[
 \det \left( t I - \left[ {\begin{array}{*{20}c}
   \left| \zeta_i \right|^2 + \gamma^2 \left| \xi_i \right|^2
   & \gamma \, \overline{\xi}_i \,\zeta_i   \\ \gamma \, \xi_i \,\overline{\zeta}_i
   & \left | \zeta_i \right |^2 \end{array}} \right] \right)
 = \, t^2 - \left( 2 \left| \zeta_i \right|^2 + \gamma^2 \left| \xi_i \right|^2 \right) t
   + \left| \zeta_i \right|^4 .
\]
The positive square roots of the eigenvalues of matrix $\,\left[ {\begin{array}{*{20}c}
   \left| \zeta_i \right|^2 + \gamma \left| \xi_i \right|^2
   & \gamma \, \overline{\xi}_i \,\zeta_i   \\ \gamma \, \xi_i \,\overline{\zeta}_i
   & \left | \zeta_i \right |^2 \end{array}} \right]\,$
are the singular values of matrix $\left[ {\begin{array}{*{20}c} {\zeta_i } & 0  \\
\gamma\,\xi_i  & \zeta_i \end{array}} \right]$, namely,
\[
  s_{i,1}(\gamma) = \sqrt {\left| {\zeta_i} \right|^2 + \frac{{\gamma^2\left| \xi_i \right|^2}}{2}
  + \gamma \left| \xi_i \right| \sqrt {\left| {\zeta_i} \right|^2
  + \frac{{\gamma^2\left| \xi_i \right|^2}}{4}}},
\]
and
\[
  s_{i,2}(\gamma) = \sqrt{\left| {\zeta_i} \right|^2 + \frac{{\gamma^2\left| \xi_i \right|^2}}{2}
  - \gamma \left| \xi_i \right| \sqrt{\left| {\zeta_i} \right|^2 + \frac{{\gamma^2\left| \xi_i \right|^2}}{4}} } \, .
\]
As $\gamma\ge 0$ increases, $s_{i,1}(\gamma)$ increases and
$\lim\limits_{\gamma\rightarrow +\infty} s_{i,1}(\gamma) =
+\infty$, while $s_{i,2}(\gamma)$ decreases and
$\lim\limits_{\gamma\rightarrow +\infty} s_{i,2}(\gamma) = 0$
(recall that $\left|\xi_i\right| > 0$, $i=1,2,\dots,n$). Also, it
is apparent that
\[
 s_{i,2}(\gamma) \le \left| \zeta_i \right| \le s_{i,1}(\gamma)
 \;\;\;\mbox{and}\;\;\;
 s_{i,1}(0)=s_{i,2}(0) = \left| \zeta_i \right| .
\]
Next we consider two cases with respect to $\left| \zeta_{n-1} \right|$ and $\left| \zeta_n \right|$.

\textit{Case 1.}\ Suppose $\left| \zeta_n \right| < \left|
\zeta_{n-1} \right|$.
At $\gamma = 0$, it holds that $s_{n,1}(0) = \left| \zeta_n
\right| < \left| \zeta_{n-1} \right| = s_{n-1,2}(0)$. According to
the above discussion, as the nonnegative variable $\gamma$
increases from zero, the functions
\[
   s_{1,1}(\gamma),\; s_{2,1}(\gamma),\; \dots ,\; s_{n-1,1}(\gamma),\; s_{n,1}(\gamma)
\]
increase to $+\infty$, whereas the functions
\[
   s_{1,2}(\gamma),\; s_{2,2}(\gamma),\; \dots ,\; s_{n-1,2}(\gamma),\; s_{n,2}(\gamma)
\]
decrease to $\,0$. Let $(\gamma_0,s_0)$ be the first point in
$\mathbb{R}^2$ where the graph of the increasing function
$s_{n,1}(\gamma)$ intersects the graph of one of the $n-1$
decreasing functions $s_{1,2}(\gamma) , s_{2,2}(\gamma) , \dots ,
s_{n-1,2}(\gamma)$, say $\,s_{\kappa,2}(\gamma)\,$ (for some
$\kappa \in \{ 1,2,\dots,n-1\}$). Note that by the definition of
$s_{i,1}(\gamma)$ and $s_{i,2}(\gamma)$ ($i=1,2,\dots,n$), $s_0$
lies in the open interval $( 0 , \left| \zeta_{n-1} \right| )$ and
the graph of $s_{n,1}(\gamma)$ cannot intersect the graph of one
of the increasing functions $s_{1,1}(\gamma), s_{2,1}(\gamma),
\dots, s_{n-1,1}(\gamma)$ for $\gamma \le \gamma_0$.

Since $s_{n,2}(\gamma)$ and $s_{\kappa,2}(\gamma)$ are both
decreasing functions in $\gamma \geq 0$, it follows that (see Fig.
1 below, where $\kappa = n-1 = 2$)
\[
     \gamma_* = \gamma_0   \;\;\;\mbox{and}\;\;\;
     s_* = s_0 = s_{2n-1}(F[P(\mu);\gamma_*]) = s_{n,1}(\gamma_*)
         = s_{\kappa,2}(\gamma_*) = s_{2n-2} (F[P(\mu);\gamma_*]) .
\]
Hence, when $\left| \zeta_n \right| < \left| \zeta_{n-1} \right|$,
$\,\gamma_*\,$ is the minimum positive root of one of the
equations
\[
   s_{n,1} (\gamma) = s_{n-1,2} (\gamma) , \;\;
   s_{n,1} (\gamma) = s_{n-2,2} (\gamma) , \;\;
   \dots , \;\;
   s_{n,1} (\gamma) = s_{1,2} (\gamma)
\]
and $\,s_*\,$ is a multiple singular value of
$F[P(\mu);\gamma_*]$.

\textit{Case 2.}\ Suppose $\left| \zeta_n \right| = \left|
\zeta_{n-1} \right|$.
Then, it follows that $s_{n,1}(\gamma) = s_{n-1,1}(\gamma)$ and $s_{n,2}(\gamma) = s_{n-1,2}(\gamma)$.
Moreover, one can see that at $\gamma=0$,
\[
  s_{n,1}(0) = s_{n,2}(0) = s_{n-1,1}(0) = s_{n-1,2}(0)
  = \left| \zeta_n \right| = \left| \zeta_{n-1} \right|,
\]
i.e.,
\[
  s_{2n} (F[P(\mu);0]) = s_{2n-1} (F[P(\mu);0]) =  s_{2n-2} (F[P(\mu);0]) =  s_{2n-3} (F[P(\mu);0])
  = \left| \zeta_n \right| = \left| \zeta_{n-1} \right| .
\]
Since $s_{n,2}(\gamma)$ and $s_{n-1,2}(\gamma))$ are decreasing
functions in $\gamma \geq 0$, $\,s_{2n-1} (F[P(\mu);\gamma])$
attains its maximum value $s_*$ at $\gamma = 0 = \gamma_*$, and
$s_*$ is a multiple singular value of $F[P(\mu);0]$. In this
non-generic case, an upper bound and an associate perturbed matrix
polynomial can be computed by the method described in Section 6 of
\cite{papa}.

Hence, we have the following result.

\begin{theorem} \label{result}
Let $P(\lambda)$ in (\ref{plambda}) be a weakly normal matrix polynomial, and let
$\mu\in\mathbb{C} \backslash \sigma(P')$. If $\gamma_*> 0$ is a point where the
singular value $s_{2n - 1} (F[P(\mu );\gamma])$ attains its maximum value, then
$s_* = s_{2n - 1} (F[P(\mu);\gamma_*])>0$ is a multiple singular value of $F[P(\mu);\gamma_*]$.
\end{theorem}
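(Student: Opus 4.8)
The plan is to reduce $F[P(\mu);\gamma]$ to block-diagonal form by the unitary similarity already displayed in the text, and then to analyse the singular values of the resulting $2\times2$ blocks $\left[\begin{smallmatrix}\zeta_i & 0\\ \gamma\xi_i & \zeta_i\end{smallmatrix}\right]$ individually. Since singular values are invariant under unitary (in fact unitary similarity) transformations and under permutation, the singular values of $F[P(\mu);\gamma]$ are exactly the union, over $i=1,\dots,n$, of the pair $s_{i,1}(\gamma)\ge s_{i,2}(\gamma)$ whose closed-form expressions are given above. Hence $s_{2n-1}(F[P(\mu);\gamma])$, as the second-smallest element of this multiset of size $2n$, is governed entirely by the behaviour of these $2n$ explicit functions of $\gamma$.

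First I would record the monotonicity facts already established: each $s_{i,1}(\gamma)$ is strictly increasing from $|\zeta_i|$ to $+\infty$, each $s_{i,2}(\gamma)$ is strictly decreasing from $|\zeta_i|$ to $0$, and $s_{i,2}(\gamma)\le|\zeta_i|\le s_{i,1}(\gamma)$ for all $\gamma\ge0$; these use only $|\xi_i|>0$, which holds because $P'(\mu)$ is nonsingular. Next I would split into the two cases according to whether $|\zeta_n|<|\zeta_{n-1}|$ or $|\zeta_n|=|\zeta_{n-1}|$, exactly as in the discussion preceding the statement. In Case 2, at $\gamma=0$ we already have $s_{2n}=s_{2n-1}=s_{2n-2}=s_{2n-3}=|\zeta_n|$, and since the two relevant decreasing branches $s_{n,2},s_{n-1,2}$ move $s_{2n-1}$ downward for $\gamma>0$, the maximum of $s_{2n-1}(F[P(\mu);\gamma])$ over $\gamma\ge0$ is attained at $\gamma_*=0$; but the hypothesis demands $\gamma_*>0$, so Case 2 cannot occur under the hypothesis, or if one prefers, the conclusion $s_*$ multiple is vacuously consistent. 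Thus the substance is Case 1.

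In Case 1, I would argue as follows. For small $\gamma$ the smallest singular value of $F[P(\mu);\gamma]$ is $s_{n,1}(\gamma)$ (starting at $|\zeta_n|$, the strictly smallest $|\zeta_i|$) and the second smallest is $\min\{s_{n,2}(\gamma),\min_{i\le n-1}s_{i,2}(\gamma)\}=\min_{i\le n-1}s_{i,2}(\gamma)$, because $s_{n,2}(\gamma)\le s_{n,1}(\gamma)$ would make it the smallest, not the second smallest — so in fact for small $\gamma$, $s_{2n-1}(F[P(\mu);\gamma])$ equals $\min_{i\le n-1}s_{i,2}(\gamma)$ while $s_{2n}=s_{n,1}(\gamma)$... wait, one must be careful here: $s_{n,1}(0)=|\zeta_n|=s_{n,2}(0)$, so at $\gamma=0$ the two smallest are both $|\zeta_n|$, and for $\gamma>0$ small, $s_{n,2}(\gamma)<|\zeta_n|<s_{n,1}(\gamma)$, so $s_{2n}=s_{n,2}(\gamma)$ and $s_{2n-1}=s_{n,1}(\gamma)$ as long as $s_{n,1}(\gamma)$ stays below $\min_{i\le n-1}s_{i,2}(\gamma)$. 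Let $(\gamma_0,s_0)$ be the first $\gamma>0$ at which $s_{n,1}(\gamma)$ meets one of $s_{1,2}(\gamma),\dots,s_{n-1,2}(\gamma)$, say $s_{\kappa,2}$; on $[0,\gamma_0]$, $s_{2n-1}(F[P(\mu);\gamma])=s_{n,1}(\gamma)$ is increasing, and for $\gamma>\gamma_0$, $s_{2n-1}(F[P(\mu);\gamma])$ is at most $\max\{s_{n,2}(\gamma),\text{next decreasing branch}\}$, all of which are $\le s_0$ and strictly decreasing past $\gamma_0$. Hence the maximum is attained precisely at $\gamma_*=\gamma_0$ with $s_*=s_0=s_{n,1}(\gamma_*)=s_{\kappa,2}(\gamma_*)$, and since both of these equal $s_{2n-1}$ and $s_{2n-2}$ respectively at $\gamma_*$, the value $s_*$ is a multiple singular value.

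The main obstacle is the bookkeeping of which explicit function realizes $s_{2n-1}(F[P(\mu);\gamma])$ on each subinterval of $[0,\infty)$ — in particular ruling out that an increasing branch $s_{i,1}$ with $i\le n-1$ could overtake the configuration before $\gamma_0$ (handled by noting $s_{i,1}(\gamma)\ge|\zeta_i|\ge s_{i,2}(\gamma)\ge s_{n,1}(\gamma)$ on $[0,\gamma_0]$), and verifying that after the first crossing the quantity only decreases, so no later $\gamma$ can tie the record. A clean way to organize this is: $s_{2n-1}(F[P(\mu);\gamma])=\min_{\text{pairs}}\max(\cdot)$-type expression is cumbersome, so instead I would define $g(\gamma):=s_{2n-1}(F[P(\mu);\gamma])$, show $g(\gamma)=s_{n,1}(\gamma)$ for $\gamma\in[0,\gamma_0]$, show $g(\gamma)<s_0$ for $\gamma>\gamma_0$ using that every candidate for the second-smallest value past $\gamma_0$ is a decreasing function bounded above by its value $\le s_0$ at $\gamma_0$, and conclude. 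A subtle point worth a sentence: one must confirm $s_0\in(0,|\zeta_{n-1}|)$ so that the crossing with $s_{n,1}$ (not with some $s_{i,1}$) is genuinely the governing event, which follows from $s_{n,1}(\gamma_0)=s_{\kappa,2}(\gamma_0)\le|\zeta_\kappa|\le|\zeta_{n-1}|$ and $s_{n,1}(\gamma_0)>s_{n,1}(0)=|\zeta_n|>0$.
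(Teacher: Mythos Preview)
Your proposal is correct and follows essentially the same route as the paper's own argument in Section~3: the same unitary block-diagonalization, the same explicit $2\times2$ singular values $s_{i,1},s_{i,2}$ with their monotonicity, and the same two-case split on $|\zeta_n|$ versus $|\zeta_{n-1}|$, leading to the crossing point $(\gamma_0,s_0)$ where $s_{n,1}(\gamma_0)=s_{\kappa,2}(\gamma_0)$. If anything you supply more detail than the paper does in tracking which branch realizes $s_{2n-1}$ before and after $\gamma_0$ and why the maximum cannot recur. One small slip in your closing parenthetical: the ordering $|\zeta_1|\ge\cdots\ge|\zeta_n|$ gives $|\zeta_\kappa|\ge|\zeta_{n-1}|$ (not $\le$), so the bound $s_0<|\zeta_{n-1}|$ should instead be argued via $s_0=s_{n,1}(\gamma_0)\le s_{n-1,2}(\gamma_0)<s_{n-1,2}(0)=|\zeta_{n-1}|$; this does not affect the main conclusion.
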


\section{Computing the desired singular vectors}  \label{s4}

In this section, we apply a technique proposed in
\cite{ikramovasli} (see also the proof of Lemma 5 in
\cite{malyshev}) to compute suitable singular vectors of
$F[P(\mu);\gamma_*]$ corresponding to the singular value $s_*$,
which satisfy (1) and (2) of Lemma \ref{dotaee}. It is remarkable
that the proposed technique can be applied to \textit{general}
matrix polynomials and not only to weakly normal matrix
polynomials.

\subsection{The case of multiplicity 2}

First we consider the case where $\gamma_* > 0$ and the multiplicity
of the singular value $s_* > 0$ is equal to $2$, and we work on the
example of Section \ref{s2}.

Recall that for the normal matrix polynomial $P(\lambda)$ in
(\ref{pnormal}) and for $\mu=-4$, the singular value
$s_{2n-1}(F[P(\mu);\gamma]) = s_{5}(F[P(-4);\gamma])$ attains its
maximum value at $\gamma_{*}=2.0180$ and $s_* =
s_{5}(F[P(-4);2.0180]) = s_{4}(F[P(-4);2.0180]) = 12.8841$ (i.e.,
$s_*$ is a double singular value of $F[P(-4);2.0180]$). Two pairs
of left and a right singular vectors of $F[P(-4);2.0180]$
corresponding to $s_*$, which do not satisfy properties (1) and
(2) of Lemma \ref{dotaee} are
\[
   \left[ {\begin{array}{*{20}c}
   {u_1 (\gamma _* )}  \\
   {u_2 (\gamma _* )}  \\
   \end{array}} \right] = \left[  \begin{array}{c}
    0  \\  0.8407  \\  0  \\  0  \\  0.5416 \\ 0
   \end{array} \right] , \;\;\;
   \left[ {\begin{array}{*{20}c}
   {v_1 (\gamma _* )}  \\
   {v_2 (\gamma _* )}  \\
   \end{array}} \right] = \left[  \begin{array}{c}
    0  \\  0.5416  \\  0  \\  0  \\  0.8407 \\ 0
   \end{array} \right],
\]
and
\[
   \left[ {\begin{array}{*{20}c}
   {\hat{u}_1 (\gamma _* )}  \\
   {\hat{u}_2 (\gamma _* )}  \\
   \end{array}} \right] = \left[  \begin{array}{c}
    0  \\  0  \\  -0.4222  \\  0  \\  0  \\ 0.9065
   \end{array} \right]  , \;\;\;
   \left[ {\begin{array}{*{20}c}
   {\hat{v}_1 (\gamma _* )}  \\
   {\hat{v}_2 (\gamma _* )}  \\
   \end{array}} \right] = \left[  \begin{array}{c}
    0  \\  0  \\  -0.9065  \\  0  \\  0  \\ 0.4222
   \end{array} \right] .
\]
In particular, we have
\[
  u_2({\gamma_*})^* P'(-4) v_1({\gamma_*})
  = - 2.6396 \ne 0 \;\;\; \mbox{and} \;\;\;
  \hat{u}_2({\gamma _*})^* P'(-4)
 \hat{v}_1({\gamma _*}) = 4.1089 \ne 0 .
\]

\begin{figure}
\centering
\includegraphics[width=0.60\linewidth]{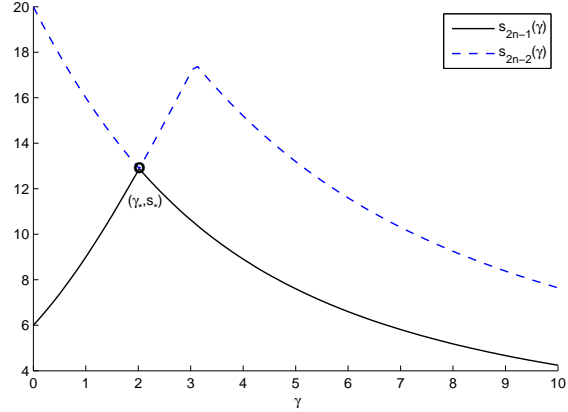}
\caption{\small The singular values $s_{2n-1} (F[P(\mu);\gamma ])$
(solid line) and $s_{2n-2} (F[P(\mu);\gamma])$ (dashed line).}
\label{fig:fig11}
\end{figure}
In Figure \ref{fig:fig11}, the graphs of
\[
   s_{2n-1} (F[P(\mu);\gamma ]) = s_5 (F[P(-4);\gamma ])
  \;\;\;\, \mbox{and} \;\;\;\,
   s_{2n-2} (F[P(\mu);\gamma ]) = s_4 (F[P(-4);\gamma ])
\]
are plotted for $\gamma \in \left[ {0,10} \right]$, and their
common point $(\gamma_*, s_*)=(2.0180,12.8841)$ is marked with
``$\circ$''. With respect to the discussion in the previous
section, it is worth noting that in this example, the graph of
$s_{2,2} (\gamma)$ (that is, $s_{n-1,2}(\gamma)$) is the graph of
the decreasing functions $s_{1,2} (\gamma)$ and $s_{2,2} (\gamma)$
that intersects first the graph of the increasing function
$s_{3,1} (\gamma)$ (that is, $s_{n,1}(\gamma)$). Moreover, it is
apparent that $s_{2n-1} (F[P(\mu);\gamma ])$ and $s_{2n-2}
(F[P(\mu);\gamma ])$ are non-differentiable functions at
$\gamma_*$.

Since $s_* = s_5 (F[P(-4);2.0180]) = s_4 (F[P(-4);2.0180]) =
12.8841$ is a double singular value, the pairs of unit vectors
 $\left[ {\begin{array}{*{20}c}
   u_1 (\gamma _* )  \\
   u_2 (\gamma _* )  \\
   \end{array}} \right],
 \left[ {\begin{array}{*{20}c}
   {\hat{u}_1 (\gamma _* )}  \\
   {\hat{u}_2 (\gamma _* )}
   \end{array}} \right]$ and
 $\left[ {\begin{array}{*{20}c}
   v_1 (\gamma _* )  \\
   v_2 (\gamma _* )  \\
   \end{array}} \right],
 \left[ {\begin{array}{*{20}c}
   {\hat{v}_1 (\gamma _* )}  \\
   {\hat{v}_2 (\gamma _* )}
   \end{array}} \right]$
form orthonormal bases of the left and right singular subspaces
corresponding to $s_*$, respectively. So, recalling that in Lemma
\ref{dotaee}, assertion (1) yields assertion (2), henceforth we
are looking for a pair of unit vectors
\begin{equation}  \label{uvnew}
   \left[ {\begin{array}{*{20}c}
   {\tilde{u}_1 (\gamma_* )}  \\
   {\tilde{u}_2 (\gamma_* )}  \end{array}} \right]
   = \alpha \left[ {\begin{array}{*{20}c}
    u_1 (\gamma_*)  \\ u_2 (\gamma_*)  \end{array}} \right]
   + \beta \left[ {\begin{array}{*{20}c}
   \hat{u}_1 (\gamma_*)  \\ \hat{u}_2 (\gamma_*)  \end{array}} \right]
   ,\;\;
   \left[ {\begin{array}{*{20}c}
   \tilde{v}_1 (\gamma_*) \\
   \tilde{v}_2 (\gamma_*) \end{array}} \right]
   = \alpha \left[ {\begin{array}{*{20}c}
   v_1 (\gamma_*)  \\ v_2 (\gamma_*) \end{array}} \right]
   + \beta \left[ {\begin{array}{*{20}c}
   \hat{v}_1 (\gamma_*)  \\ \hat{v}_2 (\gamma_*) \end{array}} \right]
\end{equation}
such that
\begin{equation}\label{upvnew}
  \tilde{u}_2({\gamma_*})^*P'(\mu)\tilde{v}_1({\gamma_*}) = 0 ,
\end{equation}
where the scalars $\alpha,\beta\in\mathbb{C}$ satisfy $\left|
\alpha \right|^2 + \left| \beta  \right|^2 = 1$. By substituting
the unknown singular vectors of (\ref{uvnew}) into (\ref{upvnew}),
we obtain
\begin{equation}\label{alphabeta}
 \left[ {\begin{array}{*{20}{c}}
 \overline{\alpha}& \overline{\beta}
 \end{array}} \right] \,M\,
 \left[ {\begin{array}{*{20}{c}}
 \alpha \\
 \beta
 \end{array}} \right] = 0,
\end{equation}
where
\begin{equation} \label{matrixM}
M = \left[ {\begin{array}{*{20}{c}}
  u_2 ({\gamma_*})^* P'(\mu) v_1 ({\gamma _*})     & u_2 ({\gamma_*})^* P'(\mu) \hat{v}_1 ({\gamma _*})   \\
  \hat{u}_2 ({\gamma_*})^* P'(\mu) v_1({\gamma_*}) & \hat{u}_2 ({\gamma _*})^* P'(\mu) \hat{v}_1 ({\gamma _*})
 \end{array}  } \right].
\end{equation}

\begin{lemma}  \label{hermitianM}
The matrix $M$ in (\ref{matrixM}) is always hermitian.
\end{lemma}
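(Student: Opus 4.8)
The plan is to exploit the $2\times2$ block structure of
$F:=F[P(\mu);\gamma_*]=\left[\begin{array}{cc} P(\mu) & 0 \\ \gamma_* P'(\mu) & P(\mu)\end{array}\right]$
together with the singular-value equations satisfied by the two chosen pairs of singular vectors. Throughout I suppress the argument $\gamma_*$ and partition each of the vectors into its two $n$-dimensional blocks as in Lemma \ref{dotaee}, writing $u=(u_1,u_2)$, $v=(v_1,v_2)$, $\hat u=(\hat u_1,\hat u_2)$, $\hat v=(\hat v_1,\hat v_2)$. Since $(u,v)$ and $(\hat u,\hat v)$ are pairs of left and right singular vectors of $F$ associated with $s_*$, they satisfy $Fv=s_*u$, $F^*u=s_*v$, $F\hat v=s_*\hat u$ and $F^*\hat u=s_*\hat v$. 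First I would read off the second block row of $Fv=s_*u$ and of $F^*u=s_*v$, which yields $\gamma_* P'(\mu)v_1=s_*u_2-P(\mu)v_2$ and $P(\mu)^*u_2=s_*v_2$; the analogous identities $\gamma_* P'(\mu)\hat v_1=s_*\hat u_2-P(\mu)\hat v_2$ and $P(\mu)^*\hat u_2=s_*\hat v_2$ hold for the hatted pair.

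Next I would compute the four entries of $M$ directly from these relations. Left-multiplying $\gamma_* P'(\mu)v_1=s_*u_2-P(\mu)v_2$ by $\hat u_2^*$ and substituting $\hat u_2^*P(\mu)=s_*\hat v_2^*$ (the conjugate transpose of $P(\mu)^*\hat u_2=s_*\hat v_2$) gives $\gamma_*\,\hat u_2^*P'(\mu)v_1=s_*\bigl(\hat u_2^*u_2-\hat v_2^*v_2\bigr)$; likewise, left-multiplying $\gamma_* P'(\mu)\hat v_1=s_*\hat u_2-P(\mu)\hat v_2$ by $u_2^*$ and substituting $u_2^*P(\mu)=s_*v_2^*$ gives $\gamma_*\,u_2^*P'(\mu)\hat v_1=s_*\bigl(u_2^*\hat u_2-v_2^*\hat v_2\bigr)$. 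Because $\gamma_*$ and $s_*$ are strictly positive reals (by hypothesis, and in view of Theorem \ref{result}), conjugating the second identity and comparing with the first yields $\overline{u_2^*P'(\mu)\hat v_1}=\hat u_2^*P'(\mu)v_1$, i.e. the $(1,2)$ entry of $M$ is the complex conjugate of its $(2,1)$ entry. Setting the two pairs equal in the first identity gives $\gamma_*\,u_2^*P'(\mu)v_1=s_*\bigl(u_2^*u_2-v_2^*v_2\bigr)\in\mathbb{R}$, so the $(1,1)$ entry of $M$ is real, and the same argument applied to the hatted pair shows the $(2,2)$ entry is real. Hence $M=M^*$.

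I do not anticipate a real obstacle beyond careful bookkeeping. The single delicate point is to apply the relation $P(\mu)^*u_2=s_*v_2$ to the \emph{same} singular-vector pair whose lower block is being conjugated in each off-diagonal entry, and to keep in mind that it is precisely the strict positivity of $s_*$ and $\gamma_*$ (guaranteed by the standing hypotheses $\gamma_*>0$ and $s_*>0$) that legitimizes the conjugation step. It is worth noting that the argument uses neither weak normality nor any genericity, which matches the remark at the beginning of Section \ref{s4} that the technique applies to general matrix polynomials.
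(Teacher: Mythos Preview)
Your proof is correct and follows essentially the same route as the paper: both arguments extract the block identities $\gamma_*\,u_2^*P'(\mu)\hat v_1=s_*(u_2^*\hat u_2-v_2^*\hat v_2)$ and $\gamma_*\,\hat u_2^*P'(\mu)v_1=s_*(\hat u_2^*u_2-\hat v_2^*v_2)$ from the singular-vector relations and conclude by comparing conjugates. The only cosmetic difference is that the paper appeals to \cite{papa} for the reality of the diagonal entries, whereas you obtain it directly by specializing the same identity to a single pair.
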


\begin{proof} Recall that $\gamma_*$ and $s_*$ are positive.
By the proof of Lemma 17 in \cite{papa}, it follows that the
diagonal entries of matrix $M$ are real.

By the definition of the pairs of singular vectors
\[
   \left[ {\begin{array}{*{20}c}
   {u_1 (\gamma _* )}  \\
   {u_2 (\gamma _* )}  \\
   \end{array}} \right] ,
   \left[ {\begin{array}{*{20}c}
   {v_1 (\gamma _* )}  \\
   {v_2 (\gamma _* )}  \\
   \end{array}} \right]
   \;\;\; \mbox{and}  \;\;\;
   \left[ {\begin{array}{*{20}c}
   {\hat{u}_1 (\gamma _* )}  \\
   {\hat{u}_2 (\gamma _* )}  \\
   \end{array}} \right] ,
   \left[ {\begin{array}{*{20}c}
   {\hat{v}_1 (\gamma _* )}  \\
   {\hat{v}_2 (\gamma _* )}  \\
   \end{array}} \right]
\]
of $F[P(\mu);\gamma_*]$ corresponding to $s_*$, we have
\[
\left\{ {\begin{array}{*{20}c}
   {\left[ {\begin{array}{*{20}c}
   {P(\mu )} & 0  \\
   {\gamma _* P'(\mu )} & {P(\mu )}  \\
\end{array}} \right]\left[ {\begin{array}{*{20}c}
   {v_1 (\gamma _* )}  \\
   {v_2 (\gamma _* )}  \\
\end{array}} \right] = s_* \left[ {\begin{array}{*{20}c}
   {u_1 (\gamma _* )}  \\
   {u_2 (\gamma _* )}  \\
\end{array}} \right],}  \\
   {\left[ {\begin{array}{*{20}c}
   {P(\mu )} & 0  \\
   {\gamma _* P'(\mu )} & {P(\mu )}  \\
\end{array}} \right]\left[ {\begin{array}{*{20}c}
   {\hat v_1 (\gamma _* )}  \\
   {\hat v_2 (\gamma _* )}  \\
\end{array}} \right] = s_* \left[ {\begin{array}{*{20}c}
   {\hat u_1 (\gamma _* )}  \\
   {\hat u_2 (\gamma _* )}  \\
\end{array}} \right],}  \\
\end{array}} \right.
\]
or equivalently,
\begin{equation}  \label{ex1}
\left\{ {\begin{array}{*{20}l}
   {P(\mu )v_1 (\gamma _* ) = s_* u_1 (\gamma _* ),}  \\
   {\gamma _* P'(\mu )v_1 (\gamma _* ) + P(\mu )v_2 (\gamma _* ) = s_* u_2 (\gamma _* ),}  \\
   {P(\mu )\hat v_1 (\gamma _* ) = s_* \hat u_1 (\gamma _* ),}  \\
   {\gamma _* P'(\mu )\hat v_1 (\gamma _* ) + P(\mu )\hat v_2 (\gamma _* ) = s_* \hat u_2 (\gamma _* ),}
\end{array}} \right.
\end{equation}
and
\[
\left\{ {\begin{array}{*{20}c}
   {\left[ {\begin{array}{*{20}c}
   {u_1 (\gamma _* )^* } & {u_2 (\gamma _* )^* }  \\
\end{array}} \right]\left[ {\begin{array}{*{20}c}
   {P(\mu )} & 0  \\
   {\gamma _* P'(\mu )} & {P(\mu )}  \\
\end{array}} \right] = s_* \left[ {\begin{array}{*{20}c}
   {v_1 (\gamma _* )^* } & {v_2 (\gamma _* )^* }  \\
\end{array}} \right],}  \\
   {\left[ {\begin{array}{*{20}c}
   {\hat u_1 (\gamma _* )^* } & {\hat u_2 (\gamma _* )^* }  \\
\end{array}} \right]\left[ {\begin{array}{*{20}c}
   {P(\mu )} & 0  \\
   {\gamma _* P'(\mu )} & {P(\mu )}  \\
\end{array}} \right] = s_* \left[ {\begin{array}{*{20}c}
   {\hat v_1 (\gamma _* )^* } & {\hat v_2 (\gamma _* )^* }  \\
\end{array}} \right],}  \\
\end{array}} \right.
\]
or equivalently,
\begin{equation}  \label{ex2}
\left\{ {\begin{array}{*{20}l}
   {u_1 (\gamma _* )^* P(\mu ) + \gamma _* u_2 (\gamma _* )^* P'(\mu ) = s_* v_1 (\gamma _* )^* ,}  \\
   {u_2 (\gamma _* )^* P(\mu ) = s_* v_2 (\gamma _* )^* ,}  \\
   {\hat u_1 (\gamma _* )^* P(\mu ) + \gamma _* \hat u_2 (\gamma _* )^* P'(\mu ) = s_* \hat v_1 (\gamma _* )^* ,} \\
   {\hat u_2 (\gamma _* )^* P(\mu ) = s_* \hat v_2 (\gamma _* )^* . }
\end{array}} \right.
\end{equation}

By multiplying the fourth equation in (\ref{ex1}) by $u_2 (\gamma
_* )^*$ from the left, and the second equation of (\ref{ex2}) by
$\hat v_2 (\gamma _* )$ from the right, we obtain
\begin{equation}  \label{ex3}
\gamma _* u_2 (\gamma _* )^* P'(\mu )\hat v_1 (\gamma _* ) + u_2
(\gamma _* )^* P(\mu )\hat v_2 (\gamma _* ) = s_* u_2 (\gamma _*
)^* \hat u_2 (\gamma _* ),
\end{equation}
and
\begin{equation}  \label{ex4}
u_2 (\gamma _* )^* P(\mu )\hat v_2 (\gamma _* ) = s_* v_2 (\gamma
_* )^* \hat v_2 (\gamma _* ),
\end{equation}
respectively. As a consequence,
\begin{equation}  \label{ex5}
\gamma _* u_2 (\gamma _* )^* P'(\mu )\hat v_1 (\gamma _* ) = s_*
\left( {u_2 (\gamma _* )^* \hat u_2 (\gamma _* ) - v_2 (\gamma _*
)^* \hat v_2 (\gamma _* )} \right).
\end{equation}
Performing similar calculations, one can verify that
\begin{equation}  \label{ex6}
\gamma _* \hat u_2 (\gamma _* )^* P'(\mu )v_1 (\gamma _* ) = s_*
\left( {\hat u_2 (\gamma _* )^* u_2 (\gamma _* ) - \hat v_2
(\gamma _* )^* v_2 (\gamma _* )} \right).
\end{equation}
Clearly, equations (\ref{ex5}) and (\ref{ex6}) imply that the
non-diagonal entries of matrix $M$ are complex conjugate.
\end{proof}

By Lemma \ref{dotaee}\,(1), equation (\ref{alphabeta}) has always
a nontrivial (i.e., nonzero) solution, and hence, the hermitian
matrix $M$ in (\ref{matrixM}) cannot be (positive or negative)
definite. In our numerical example, $M$ has a negative and a
positive diagonal entries (namely, $- 2.6396$ and $4.1089$), and
thus, it is an indefinite hermitian matrix.

To derive an explicit solution of (\ref{alphabeta}), suppose that
$\eta_1,\eta_2 \in \mathbb{C}$ are the (real) eigenvalues of
matrix $M$, with $\eta_1 > 0 > \eta_2$, and let $w_1,w_2 \in
\mathbb{C}^2$ be unit eigenvectors of $M$ corresponding to
$\eta_1$ and $\eta_2$, respectively. Then, it is straightforward
to see (keeping in mind the orthogonality of the eigenvectors)
that the unit vector
\[
    \left[ {\begin{array}{*{20}{c}} \alpha \\  \beta  \end{array}} \right] =
    \sqrt{\frac{\left|\eta_2\right|}{\left|\eta_1\right|+\left|\eta_2\right|}}\,w_1
    + \sqrt{\frac{\left|\eta_1\right|}{\left|\eta_1\right|+\left|\eta_2\right|}}\,w_2
\]
satisfies
\[
  \left[ {\begin{array}{*{20}{c}}
  \overline{\alpha}& \overline{\beta}
  \end{array}} \right] \,M\,
  \left[ {\begin{array}{*{20}{c}}
  \alpha \\
  \beta
  \end{array}} \right]
  = \frac{\left|\eta_1\right| \eta_2}{\left|\eta_1\right|+\left|\eta_2\right|} +
    \frac{\left|\eta_2\right|\eta_1}{\left|\eta_1\right|+\left|\eta_2\right|}
  = 0 .
\]

Finally, in order to verify the validity of this refinement, we
return again to the normal matrix polynomial $P(\lambda)$ in
(\ref{pnormal}), and by applying the above methodology, we obtain
$\alpha  = 0.6254$ and $\beta  = 0.7803$. Consequently, the
desired vectors in (\ref{uvnew}) are (approximately)
\[
   \left[ {\begin{array}{*{20}c}
   {\tilde{u}_1 (\gamma_* )}  \\
   {\tilde{u}_2 (\gamma_* )}  \end{array}} \right]
   =  \left[  \begin{array}{c}
    0  \\  0.6560  \\  -0.2640  \\  0  \\   0.4226  \\  0.5669
    \end{array} \right]
   \;\;\; \mbox{and} \;\;\;
   \left[ {\begin{array}{*{20}c}
   {\tilde{v}_1 (\gamma_* )}  \\
   {\tilde{v}_2 (\gamma_* )}  \end{array}} \right]
   =  \left[  \begin{array}{c}
    0  \\   0.4226  \\ - 0.5669  \\  0  \\  0.6560  \\    0.2640
   \end{array} \right] .
\]
In particular, it holds that
\[
   \tilde{u}_2^*({\gamma_*}) P'(-4) \tilde{v}_1({\gamma _*}) = -4.4409 \cdot 10^{-16} ,
\]
and for the $n\times 2$ matrices
$\tilde{U}(\gamma_* ) = \left [ \tilde{u}_1 (\gamma_*) ~ \tilde{u}_2 (\gamma_*) \right]$
and $\tilde{V} (\gamma_*) = \left[ \tilde{v}_1(\gamma_*) ~ \tilde{v}_2 (\gamma_*) \right]$,
we have
\[
   \left\| \tilde{U}^* ({\gamma_*}) \tilde{U} ({\gamma_*}) -
   \tilde{V}^* ({\gamma _*}) \tilde{V} ({\gamma _*}) \right\|_2  = 1.1383 \cdot 10^{-6} .
\]
Thus, Lemma \ref{dotaee} is verified.

Moreover, using the matrices $\tilde{U} ({\gamma_*})$ and
$\tilde{V} ({\gamma_*})$, Theorem \ref{amultistar} yields the
upper bound $0.9465$ for the distance from $P(\lambda)$ to the set
of $3\times 3$ quadratic matrix polynomials that have $\mu=-4$ as
a multiple eigenvalue, and the perturbed matrix polynomial
   {\small
\[
 { \tilde{Q}_{{\gamma _*}}}(\lambda ) = \left[ {\begin{array}{*{20}{c}}
 1&0&0\\
 0&{0.0680}&{0.0152}\\
 0&{ - 0.1552}&{0.5986}
 \end{array}} \right] {\lambda^2}
 + \left[ {\begin{array}{*{20}{c}}
 { - 3}&0&0\\
 0&{ - 0.0680}&{ - 0.0152}\\
 0&{0.1552}&{3.4014}
 \end{array}} \right]\lambda + \left[ {\begin{array}{*{20}{c}}
2&0&0\\
0&{ - 0.9320}&{0.0152}\\
0&{ - 0.1552}&{1.5986}
\end{array}} \right]
\]
   }
that lies on the boundary of $\,\mathcal{B} \left( P , 0.9465 , \textup{w} \right)$
and has spectrum
\[
   \sigma \left( \tilde{Q}_{{\gamma _*}} (\lambda) \right) =
   \left\{ 1 ,\, 2 ,\, 4.1982 ,\, - 0.5140 ,\, - 4.0000 +  \textup{i}\,0.0031 ,\, - 4.0000  -  \textup{i}\,0.0031 \right\} .
\]
In addition, the lower bound $0.4031$ of the distance is given by
Theorem 11 in \cite{papa}. (All computations were performed in
Matlab with $16$ significant digits.)

\subsection{The case of multiplicity greater than 2}

Suppose that $\gamma_* > 0$, and the multiplicity of the singular
value $s_* > 0$ is $r \geq 3$. For weakly normal matrix polynomials,
this means that the graph of the increasing function $s_{n,1}(\gamma)$
intersects the graphs of more than one of the $n-1$ decreasing
functions $s_{1,2}(\gamma), s_{2,2}(\gamma) , \dots , s_{n-1,2}(\gamma)$,
at the point $(\gamma_*,s_*)$.

Let also
\[
  \left[ {\begin{array}{*{20}c} u_1^{(1)} (\gamma_*) \\ u_2^{(1)} (\gamma_*) \end{array}}
  \right],\,
  \left[ {\begin{array}{*{20}c} u_1^{(2)} (\gamma_*) \\ u_2^{(2)} (\gamma_*) \end{array}}
  \right],\, \dots ,\,
  \left[ {\begin{array}{*{20}c} u_1^{(r)} (\gamma_*) \\ u_2^{(r)} (\gamma_*) \end{array}} \right]
\]
and
\[
  \left[ {\begin{array}{*{20}c} v_1^{(1)} (\gamma_*) \\ v_2^{(1)}
(\gamma_*) \end{array}}
  \right],\,
  \left[ {\begin{array}{*{20}c} v_1^{(2)} (\gamma_*) \\ v_2^{(2)} (\gamma_*) \end{array}}
  \right],\, \dots ,\,
  \left[ {\begin{array}{*{20}c} v_1^{(r)} (\gamma_*) \\ v_2^{(r)} (\gamma_*) \end{array}} \right]
\]
be orthonormal bases of the left and right singular subspaces of
$F[P(\mu);\gamma_*]$ corresponding to $s_*$, respectively. Then,
we are looking for a pair of unit vectors
\begin{equation}  \label{uvnew2}
   \left[ {\begin{array}{*{20}c}
   {\tilde{u}_1 (\gamma_* )}  \\
   {\tilde{u}_2 (\gamma_* )}  \end{array}} \right]
   = \sum_{j=1}^r \alpha_j \left[ {\begin{array}{*{20}c}
    u_1^{(j)} (\gamma_*)  \\ u_2^{(j)} (\gamma_*)  \end{array}} \right]
   ,\;\;
   \left[ {\begin{array}{*{20}c}
   \tilde{v}_1 (\gamma_*) \\
   \tilde{v}_2 (\gamma_*) \end{array}} \right]
   = \sum_{j=1}^r \alpha_j  \left[ {\begin{array}{*{20}c}
   v_1^{(j)} (\gamma_*)  \\ v_2^{(j)} (\gamma_*) \end{array}} \right]
\end{equation}
such that
\begin{equation}\label{upvnew2}
  \tilde{u}_2({\gamma_*})^*P'(\mu)\tilde{v}_1({\gamma_*}) = 0 ,
\end{equation}
where the scalars $\alpha_1, \alpha_2, \dots, \alpha_r \in
\mathbb{C}$ satisfy $\left| \alpha_1 \right|^2 + \left| \alpha_2
\right|^2 + \cdots + \left| \alpha_r \right|^2 = 1$.

Following the arguments of the methodology described in the previous subsection,
we can compute the desired vectors in (\ref{uvnew2}) that satisfy
(\ref{upvnew2}). In particular, we need to find a solution of the
equation
\begin{equation} \label{comb}
    \left[ {\begin{array}{*{20}{cccc}}
    \overline{\alpha}_1 & \overline{\alpha}_2 & \cdots & \overline{\alpha}_r
    \end{array}} \right] \, M_r \,
    \left[ {\begin{array}{*{20}{c}}
    \alpha_1   \\
    \alpha_2   \\
    \vdots     \\
    \alpha_r \end{array}} \right]  =  0,
\end{equation}
where the $r\times r$ matrix
\[
 M_r = \left[ {\begin{array}{*{20}{cccc}}
 u_2^{(1)}({\gamma_*})^*P'(\mu)v_1^{(1)}({\gamma_*}) & u_2^{(1)}({\gamma_*})^*P'(\mu)v_1^{(2)}({\gamma_*}) & \cdots & u_2^{(1)}({\gamma_*})^*P'(\mu)v_1^{(r)}({\gamma_*})  \\
 u_2^{(2)}({\gamma_*})^*P'(\mu)v_1^{(1)}({\gamma_*}) & u_2^{(2)}({\gamma_*})^*P'(\mu)v_1^{(2)}({\gamma_*}) & \cdots & u_2^{(2)}({\gamma_*})^*P'(\mu)v_1^{(r)}({\gamma_*})  \\
          \vdots     &    \vdots     &     \ddots     &      \vdots   \\
 u_2^{(r)}({\gamma_*})^*P'(\mu)v_1^{(1)}({\gamma_*}) & u_2^{(r)}({\gamma_*})^*P'(\mu)v_1^{(2)}({\gamma_*}) & \cdots & u_2^{(r)}({\gamma_*})^*P'(\mu)v_1^{(r)}({\gamma_*})  \\
 \end{array}  } \right]
\]
is hermitian and not definite. Considering a unit eigenvector
$w_{\max} \in \mathbb{C}^r$ of $M_r$ corresponding to the maximum
eigenvalue $\eta_{\max} > 0$ of $M_r$ and an eigenvector $w_{\min}
\in \mathbb{C}^r$ corresponding to the minimum eigenvalue
$\eta_{\min} < 0$ of $M_r$, it is straightforward to verify that
the unit vector
\[
 \sqrt{\frac{\left|\eta_{\min}\right|}{\left|\eta_{\max}\right|+\left|\eta_{\min}\right|}}\,w_{\max}
 +
 \sqrt{\frac{\left|\eta_{\max}\right|}{\left|\eta_{\max}\right|+\left|\eta_{\min}\right|}}\,w_{\min}
\]
satisfies (\ref{comb}).



\begin{thebibliography}{10}

{\small

 \bibitem{demmel1}
 J.W. Demmel,
 On condition numbers and the distance to the nearest ill-posed problem,
 \textit{Numer. Math.}, 51 (1987), 251--289.

 \bibitem{glr}
 I. Gohberg, P. Lancaster and L. Rodman,
 \textit{Matrix Polynomials},
 Academic Press, New York, 1982.

 \bibitem{gracia}
 J.M. Gracia,
 Nearest matrix with two prescribed eigenvalues,
 \textit{Linear Algebra Appl.}, 401 (2005), 277--294.

 \bibitem{ikramovasli}
 Kh.D. Ikramov and A.M. Nazari,
 Computational aspects of the use of Malyshev's formula,
 \textit{Comput. Math. Math. Phys.}, 44(1) (2004), 1--5.

 \bibitem{onaremarkable}
 Kh.D. Ikramov and A.M. Nazari,
 On a remarkable implication of the Malyshev formula,
 \textit{Dokl. Akad. Nauk.}, 385 (2002), 599--600.

 \bibitem{kacz}
 T. Kaczorek,
 \textit{Polynomial and Rational Matrices: Applications in Dynamical Systems Theory},
 Springer-Verlag, London, 2007.

 \bibitem{lanc}
 P. Lancaster,
 \textit{Lambda-Matrices and Vibrating Systems},
 Dover Publications, 2002.

 \bibitem{nn}
 P. Lancaster and P. Psarrakos,
 Normal and seminormal eigenvalues of matrixp olynomials.
 \textit{Integral Equations Operator Theory}, 41 (2001), 331--342.

 \bibitem{lipert}
 R.A. Lippert,
 Fixing two eigenvalues by a minimal perturbation,
 \textit{Linear Algebra Appl.}, 406 (2005), 177--200.

 \bibitem{malyshev}
 A.N. Malyshev,
 A formula for the 2-norm distance from a matrix to the set of matrices with a multiple eigenvalue,
 \textit{Numer. Math.}, 83 (1999), 443--454.

 \bibitem{markus}
 A.S. Markus,
 \textit{Introduction to the Spectral Theory of Polynomial Operator Pencils},
 Amer. Math. Society, Providence, RI, Translations of Mathematical Monographs, Vol. 71, 1988.

 \bibitem{nazarirajabi}
 A.M. Nazari and D. Rajabi,
 Computational aspect to the nearest matrix with two prescribed eigenvalues,
 \textit{Linear Algebra Appl.}, 432 (2010), 1--4.

 \bibitem{papanormal}
 N. Papathanasiou and P. Psarrakos,
 Normal matrix polynomials with nonsingular leading coefficients,
 \textit{Electron. J. Linear Algebra}, 17 (2008), 458--472.

 \bibitem{papa}
 N. Papathanasiou and P. Psarrakos,
 The distance from a matrix polynomial to matrix polynomials with a prescribed multiple eigenvalue,
 \textit{Linear Algebra Appl.}, 429 (2008), 1453--1477.

 \bibitem{ruhe}
 A. Ruhe,
 Properties of a matrix with a very ill-conditioned eigenproblem,
 \textit{Numer. Math.}, 15 (1970), 57--60.

 \bibitem{time}
 F. Tisseur and K. Meerbergen,
 The quadratic eigenvalue problem,
 \textit{SIAM Rev.}, 43 (2001), 235--286.

 \bibitem{wilkinson}
 J.H. Wilkinson,
 \textit{The Algebraic Eigenvalue Problem},
 Claredon Press, Oxford, 1965.

 \bibitem{wil1}
 J.H. Wilkinson,
 Note on matrices with a very ill-conditioned eigenproblem,
 \textit{Numer. Math.}, 19 (1972), 175--178.

 \bibitem{wil2}
 J.H. Wilkinson,
 On neighbouring matrices with quadratic elementary divisors,
 \textit{Numer. Math.}, 44 (1984), 1--21.

 \bibitem{wil3}
 J.H. Wilkinson,
 Sensitivity of eigenvalues,
 \textit{Util. Math.}, 25 (1984), 5--76.

 \bibitem{wil4}
 J.H. Wilkinson,
 Sensitivity of eigenvalues II,
 \textit{Util. Math.}, 30 (1986), 243--286.

}

\end{thebibliography}
\end{document}